\documentclass[12pt]{article}
\usepackage{mathrsfs}
\usepackage{amsmath}
\usepackage{amsmath,amsthm,amssymb,amscd}
\usepackage{latexsym}
\usepackage[colorlinks,linkcolor=blue,anchorcolor=blue,citecolor=blue,CJKbookmarks=True]{hyperref}
\usepackage[numbers,sort&compress]{natbib}
\usepackage{cases}
\usepackage{makecell}
\usepackage{multirow}
\usepackage{geometry}
\usepackage{tabularx}
\usepackage{graphicx}
\usepackage{tikz}
\usepackage{tikz-cd}

\allowdisplaybreaks

\newtheorem{theorem}{Theorem}[section]
\newtheorem{corollary}[theorem]{Corollary}
\newtheorem{lemma}[theorem]{Lemma}
\newtheorem{example}[theorem]{Example}
\newtheorem{proposition}[theorem]{Proposition}
\newtheorem{question}[theorem]{Question}

\theoremstyle{definition}
\newtheorem{definition}[theorem]{Definition}
\newtheorem{remark}[theorem]{Remark}

\numberwithin{equation}{section}

\begin{document}

\begin{center}
{\large  \bf Transposed Triple Products and Pro-Symmetric Rings in $\ast$-Rings}\\
\vspace{0.8cm}   Huaxi Chen$^{a}$,  Long Wang$^{b}$and Honglin Zou$^{c}$\footnote{Corresponding author. Email: honglinzou@sina.com or lwangmath@yzu.edu.cn} \\
\vspace{0.5cm} {\small $^{a}$School of Mathematics and Physics, Bengbu University, Bengbu, China\\
\small $^{b}$School of Mathematical Sciences, Yangzhou University, Yangzhou,  China\\
\small $^{c}$College of Basic Science, Zhejiang Shuren University, Hangzhou, P. R. China}
\end{center}

\bigskip

{ \bf  Abstract:}  \leftskip0truemm\rightskip0truemm
This paper investigates properties concerning transposed triple products in rings.
Motivated by Cline's formula, we characterize symmetric rings by means of group invertible elements and EP elements.
We prove that a unital ring $R$ is symmetric if and only if $abc\in R^{\sharp}$ implies $acb\in R^{\sharp}$ for all $a,b,c\in R$.
In particular, we give an answer to the problem posed in \cite[Problem 2.9]{MW1}.
An example is provided to illustrate that for a symmetric ring $R$, $abc=e$ does not generally yield $acb=e$.
For $\ast$-rings, we introduce the notion of pro-symmetric rings:
a ring $R$ is pro-symmetric if $abc\in P(R)$ implies $acb\in P(R)$ for all $a,b,c\in R$.
We show that every pro-symmetric ring is symmetric.
Several counterexamples are constructed to distinguish these classes of rings,
and their mutual inclusion relations are also discussed.
\\{  \textbf{Keywords:}}  reversible ring; symmetric ring; pro-symmetric ring; involution; group inverse; projection.
\\\noindent { \textbf{2010 Mathematics Subject Classification:}} 16W10, 16U80, 16N40.
 \bigskip

\section{\normalsize Introduction}

Generalized inverse theory is an important subject within non-commutative ring theory,
which has found extensive applications in matrix theory, operator algebras and many other related areas.
Among various generalized inverses, the Drazin inverse and the Moore-Penrose inverse
are two fundamental and widely studied concepts.
As a classical result concerning Drazin invertibility, Cline's formula establishes an intimate connection
between the generalized invertibility of the products $ab$ and $ba$ over arbitrary rings \cite{C1}.
That is, if $ab$ is Drazin invertible, then $ba$ is also Drazin invertible and $(ba)^{D}=b\bigl((ab)^{D}\bigr)^{2}a$.

Much of the existing literature focuses on element-level problems \cite{LZ1,LCC1,MA1,MZ1,Mo1,SCLZ1}.
Given two ring elements $a,b$, researchers seek necessary and sufficient conditions under which
the reversed product $ba$ inherits generalized invertibility from $ab$.
These studies treat such inheritance as a local feature of individual elements.

In contrast, this paper shifts perspective from local element-wise behaviour
to global structural properties of rings. Rather than limiting our scope to two-factor reversed products,
we exploit properties of group-invertible and EP elements for triple reversed products
to characterize several ring classes.

Symmetric and reversible rings are basic ring families defined by zero-product conditions \cite{Co1,M1}.
They are closely related to annihilators, idempotents, abelian rings and Dedekind finiteness \cite{AC1,FN1,KL1}.
Within the $\ast$-ring framework, these notions interact naturally with projections and the Moore-Penrose inverse.
In \cite{Wa1}, the author proved that $R$ is Dedekind finite if and only if $ab\in U(R)$ implies $ba\in U(R)$.
By means of generalized inverses, it has been established that $R$ is reversible
if and only if $ab\in R^{\sharp}$ implies $ba\in R^{\sharp}$, which is further equivalent to $ab\in R^{\mathrm{EP}}$ implies $ba\in R^{\mathrm{EP}}$.

Inspired by Cline-type transfer phenomena for reversed-product pairs and existing characterizations of reversible rings,
we derive new equivalent descriptions for symmetric rings.
We prove that a unital ring $R$ is symmetric if and only if group invertibility transfers from $abc$ to $acb$ for all $a,b,c\in R$.
For $\ast$-rings, $R$ is symmetric precisely when the EP property transfers from $abc$ to $acb$.
In particular, we resolve the problem posed in \cite[Problem 2.9]{MW1}.
An example shows that for a symmetric ring $R$, $abc=e$ does not generally force $acb=e$.

Using projections, we introduce the class of pro-symmetric rings, which forms a subclass of symmetric rings.
Several counterexamples are constructed to separate these ring classes, and their mutual inclusion relations are investigated.

\section{\normalsize Preliminaries}

Throughout this paper, all rings are associative with unity.
In this section, we collect some basic definitions and notation needed in the sequel.

An element $a\in R$ is Drazin invertible \cite{Dr1} if there exists an element $x\in R$ satisfying
\[
a^{k}xa=a^{k},\quad xax=x,\quad ax=xa
\]
for some positive integer $k$. The smallest such integer $k$ is known as the Drazin index of $a$,
and the unique element $x$ is called the Drazin inverse of $a$, written as $a^{D}$.
In particular, $a$ is group invertible precisely when its Drazin index is equal to $1$.
The group inverse is unique and is denoted by $a^{\sharp}$.
We let $R^{\sharp}$ stand for the set of all group-invertible elements of $R$.

An involution on a ring $R$ is an order-$2$ anti-isomorphism $a\mapsto a^{\ast}$ such that
\[
(a^{\ast})^{\ast}=a,\quad (a+b)^{\ast}=a^{\ast}+b^{\ast},\quad (ab)^{\ast}=b^{\ast}a^{\ast}.
\]
A ring endowed with an involution is referred to as a $\ast$-ring.

Given a $\ast$-ring $R$, an element $a\in R$ is Moore-Penrose invertible \cite{PT1} if there exists $x\in R$ with
\[
axa=a,\quad xax=x,\quad (ax)^{\ast}=ax,\quad (xa)^{\ast}=xa.
\]
The unique element $x$ is the Moore-Penrose inverse of $a$, denoted by $a^{\dag}$.
We use $R^{\dag}$ to denote the set of all Moore-Penrose invertible elements of $R$.

Let $R$ be a $\ast$-ring. An element $a\in R$ is said to be an EP element
if $a\in R^{\sharp}\cap R^{\dag}$ and $a^{\sharp}=a^{\dag}$.
The collection of all EP elements of $R$ is denoted by $R^{\mathrm{EP}}$.
An element $p\in R$ is a projection provided that $p^{2}=p=p^{\ast}$.
We write $E(R)$ and $P(R)$ for the sets of all idempotents and all projections of $R$, respectively.
The center of $R$ is denoted by $C(R)$.

\medskip
We next recall several well-known classes of rings defined by zero-product conditions,
which play essential roles throughout our investigation.

A ring $R$ is {\it reversible} if $ab=0$ implies $ba=0$ for all $a,b\in R$.

A ring $R$ is {\it symmetric} if $abc=0$ implies $acb=0$ for all $a,b,c\in R$.

A ring $R$ is {\it abelian} if $E(R)\subseteq C(R)$.

\section{\normalsize Characterizations of Symmetric Rings by Group Invertibility}

In this section, we investigate characterizations of symmetric rings by means of group-invertible elements and EP elements.
We begin with the fundamental properties of reversible rings.

\begin{lemma}\label{lem2.01}
{\rm(\cite{Wa1}, Theorem 3.4)}
Let $R$ be a ring with unity, and let $a,b\in R$. The following statements are equivalent:

(1) $R$ is reversible.

(2) $ab\in R^{\sharp}$ implies $ba\in R^{\sharp}$.
\end{lemma}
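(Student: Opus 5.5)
We prove the two implications separately. Throughout we use the standard fact that an idempotent $e\in R$ is group invertible with $e^{\sharp}=e$, and in particular $0\in R^{\sharp}$ with $0^{\sharp}=0$.

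\emph{(2) $\Rightarrow$ (1).} Suppose $ab=0$. Since $0\in R^{\sharp}$, hypothesis (2) gives $ba\in R^{\sharp}$. The element $ba$ is nilpotent, because $(ba)^{2}=b(ab)a=0$. A nilpotent element that is group invertible must vanish. Indeed, writing $x=(ba)^{\sharp}$, we have
\[
ba=(ba)^{2}x=0 .
\]
Hence $ab=0$ implies $ba=0$, so $R$ is reversible.

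\emph{(1) $\Rightarrow$ (2).} Assume $R$ is reversible and $ab\in R^{\sharp}$, and put $x=(ab)^{\sharp}$. Following Cline's formula, the candidate inverse is
\[
y=bx^{2}a .
\]
Two preliminary facts will be needed.

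\begin{itemize}
\item Reversible rings are abelian. If $e^{2}=e$, then $e(1-e)=0$, so for any $r$ we get $e(1-e)r=0$. Reversibility gives $(1-e)re=0$. Symmetrically, $(1-e)\cdot er=0$ gives $er(1-e)=0$. Together these give $er=re$.
\item The element $p=(ab)x=x(ab)$ is an idempotent, hence central.
\end{itemize}

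The key identity to establish is
\[
ba=bpa .
\]
Write $ab(1-p)=0$. By reversibility, $b(1-p)a=0$, which is exactly $ba=bpa$. Note that we cannot conclude $(1-p)ab=0$ from this alone, and we do not need it.

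We then verify the three defining equations.

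\begin{itemize}
\item \emph{Commutation.} Since $x$ commutes with $ab$, we have $px=x=xp$. Using centrality of $p$,
\[
(ba)y=babx^{2}a=b(abx)xa=bpxa=bxa,
\]
\[
y(ba)=bx^{2}aba=bx(xab)a=bxpa=bxa .
\]
So $(ba)y=y(ba)$.
\item \emph{$y(ba)y=y$.} We compute
\[
y(ba)y=bxa\,bx^{2}a=bx(ab)x\,xa=bxpxa=bx^{2}a=y .
\]
\item \emph{$(ba)y(ba)=ba$.} We compute
\[
(ba)y(ba)=bxa\,ba=bx(ab)a=bpa=ba,
\]
where the last step uses the key identity.
\end{itemize}

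Hence $ba\in R^{\sharp}$ with $(ba)^{\sharp}=b\bigl((ab)^{\sharp}\bigr)^{2}a$, which proves (2).

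The main obstacle is the last equation, $(ba)y(ba)=ba$. In a general ring Cline's formula only yields Drazin index at most $2$ for $ba$. It is the passage from $ab(1-p)=0$ to $b(1-p)a=0$, together with centrality of the idempotent $p$, that lowers the index to $1$.
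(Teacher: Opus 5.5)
Your proof is correct. The paper does not prove this lemma; it imports it as Theorem 3.4 of \cite{Wa1}. Your argument is therefore a self-contained substitute rather than a reconstruction of the paper's proof.

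Your direction (2)$\Rightarrow$(1) is exactly the device the paper itself uses in the proof of Proposition~\ref{pro2.02}: a group-invertible square-zero element satisfies $ba=(ba)^{2}(ba)^{\sharp}=0$.

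For (1)$\Rightarrow$(2) you verify Cline's candidate $b\bigl((ab)^{\sharp}\bigr)^{2}a$ directly against the three defining equations. The paper's analogous triple-product argument in Theorem~\ref{thm2.03} proceeds differently: it shows membership in $(acb)^{2}R\cap R(acb)^{2}$ and relies on the abelian property throughout. Your route has the advantage of producing an explicit formula for $(ba)^{\sharp}$.

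One simplification is available. The centrality of $p$ is never actually used in your computations. Every step needs only $p=abx=xab$, $px=x=xp$, and the single reversibility step $a\cdot b(1-p)=0\Rightarrow b(1-p)a=0$. So you can delete the preliminary fact that reversible rings are abelian, and also the phrase ``together with centrality of the idempotent $p$'' in your closing remark.
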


\begin{proposition}\label{pro2.02}
Let $R$ be a unital ring. The following statements are equivalent:

(1) $R$ is symmetric.

(2) $abc=0$ implies $acb\in E(R)$ for all $a,b,c\in R$.

(3) $abc=0$ implies $acb\in R^{\sharp}$ for all $a,b,c\in R$.
\end{proposition}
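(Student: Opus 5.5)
The implications (1)$\Rightarrow$(2)$\Rightarrow$(3) are immediate, so the real content is (3)$\Rightarrow$(1). If $R$ is symmetric and $abc=0$, then $acb=0$, and $0\in E(R)$. Every idempotent $e$ is group invertible with $e^{\sharp}=e$, since $e\cdot e\cdot e=e$ and $e$ commutes with itself. Hence $E(R)\subseteq R^{\sharp}$, which gives (2)$\Rightarrow$(3).

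For (3)$\Rightarrow$(1), the tool is the elementary fact that a group invertible nilpotent element is zero. Indeed, if $t\in R^{\sharp}$ and $t^{n}=0$, then $t=t^{\sharp}t^{2}=(t^{\sharp})^{n-1}t^{n}=0$. So I want to arrange that the element produced by (3) is also nilpotent.

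\textbf{Step 1: $R$ is reversible.} Take $a=1$ in (3). If $bc=0$, then $cb\in R^{\sharp}$. Also $(cb)^{2}=c(bc)b=0$, so $cb$ is nilpotent and hence $cb=0$. Thus $R$ is reversible. (Alternatively, Lemma \ref{lem2.01} gives the same route.)

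Reversibility makes $R$ semicommutative: from $xy=0$ we get $yx=0$, so $yxr=0$, and then $xry=0$ for all $r\in R$. We also get cyclic invariance: $abc=0$ implies $bca=0$ and $cab=0$. Combining the two, $abc=0$ yields
\[
abrc=0,\qquad arbc=0,\qquad carb=0,\qquad crab=0 \quad\text{for all } r\in R.
\]

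\textbf{Step 2: reduce to a nilpotency statement.} Fix $a,b,c$ with $abc=0$ and set $t=acb$. By (3), $t\in R^{\sharp}$, so it suffices to show $t^{n}=0$ for some $n$. I will try to expand $t^{2}=acbacb$, or $t^{3}$, and use the relations above to locate a vanishing pattern.

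I also have a second lever: (3) applies to every triple built from these relations. For instance, $ab(rc)=0$ gives $a(rc)b\in R^{\sharp}$ for every $r$, and $(ar)bc=0$ gives the same element $arcb\in R^{\sharp}$. The plan is to choose $r$, for instance $r=cba$ or $r=cb$, so that $u=arcb$ is visibly nilpotent by the relations. Then $u=0$, and I will try to feed this back to kill a power of $t$. One example is $r=cba$, which gives $u=t^{2}$.

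\textbf{Main obstacle.} The crux is exhibiting an actual vanishing product. Reversibility alone does not imply symmetry, so the relations from Step 1 cannot suffice on their own, and (3) must be used essentially beyond the case $a=1$. I expect to iterate: first use (3) together with nilpotency to prove an intermediate identity, such as $a r b s c=0$ for all $r,s$ (or at least $ac\,b\,a\,c\,b=0$). Then I would conclude that $t^{2}=0$, hence $t=0$. If that path stalls, I would instead apply (3) to triples like $(ca,\,b,\,r)$ or $(b,\,c,\,ar)$, which are zero products by the list above. The aim is to show that $t^{\sharp}t=tt^{\sharp}$ is an idempotent annihilated by $t$ on one side, which forces $t=0$.
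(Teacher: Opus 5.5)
\textbf{There is a genuine gap: Step 2 never proves (3)$\Rightarrow$(1).} Your treatment of (1)$\Rightarrow$(2)$\Rightarrow$(3) and your Step 1 (reversibility via $(cb)^{2}=c(bc)b=0$ and $cb\in R^{\sharp}$) are correct. Step 2, however, is a list of strategies that ends at a declared ``main obstacle'', so the key implication is left open.

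The obstacle is also misdiagnosed. You say the Step 1 relations ``cannot suffice on their own'' and that (3) must be applied to auxiliary triples. In fact the only further use of (3) you need is the one you already made, namely $t=acb\in R^{\sharp}$. The Step 1 relations alone make $t$ nilpotent:
\begin{itemize}
\item You already have $arbc=0$ for all $r$.
\item Apply semicommutativity once more, to the factorization $(arb)\cdot c=0$. This gives $arbsc=0$ for all $r,s\in R$, which is exactly the intermediate identity you expected to need (3) for.
\item Take $r=c$ and $s=a$. This reads $acbac=0$, so $t^{2}=(acbac)b=0$.
\item Hence $t=t^{\sharp}t^{2}=0$.
\end{itemize}

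\textbf{Comparison with the paper.} With that insertion your argument is complete, and it differs from the paper's. After proving reversibility, the paper uses Lemma~\ref{lem2.01} to get $bac,\,cba\in R^{\sharp}$. It then uses that reversible rings are abelian, so $bac(bac)^{\sharp}$ and $cba(cba)^{\sharp}$ are central. Finally it rewrites $acb=((acb)^{\sharp})^{3}(acb)^{4}$ and moves these idempotents around until a factor $abc$ appears. Your nilpotency route needs neither Lemma~\ref{lem2.01} nor abelianness. It also shows more: in any semicommutative ring, $abc=0$ already forces $(acb)^{2}=0$.
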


\begin{proof}
$(1)\Rightarrow(2)$ and $(2)\Rightarrow(3)$.
Trivially.

$(3)\Rightarrow(1)$.
We first show that $R$ is reversible.
Take $x,y\in R$ such that $xy=0$. Then condition (3) gives $yx\in R^{\sharp}$.
Then
\[
yx=(yx)(yx)(yx)^{\sharp}=0,
\]
which proves $R$ reversible.

Now assume $abc=0$. By condition (3), $acb\in R^{\sharp}$.
From Lemma \ref{lem2.01}, $bac\in R^{\sharp}$ and $cba\in R^{\sharp}$.
Moreover, note that reversible rings are abelian. Then $E(R) \subseteq C(R)$. Hence,
\[
\begin{aligned}
acb&=\bigl((acb)^{\sharp}\bigr)^{3}\,acbacbacbacb=\bigl((acb)^{\sharp}\bigr)^{3}\,ac\,(bac)\,ba\,(cba)\,cb \\
&=\bigl((acb)^{\sharp}\bigr)^{3}\,ac\,(bac)\bigl[(bac)(bac)^{\sharp}\bigr]\,ba\,(cba)\bigl[(cba)(cba)^{\sharp}\bigr]\,cb \\
&=\bigl((acb)^{\sharp}\bigr)^{3}\,a\bigl[(bac)(bac)^{\sharp}\bigr]c\,(bac)\,ba\,(cba)\bigl[(cba)(cba)^{\sharp}\bigr]\,cb \\
&=\bigl((acb)^{\sharp}\bigr)^{3}\,a\Bigl(b\bigl[(cba)(cba)^{\sharp}\bigr]ac\Bigr)(bac)^{\sharp}\,c\,(bac)\,ba\,(cba)\,cb \\
&=\bigl((acb)^{\sharp}\bigr)^{3}\,(abc)\,ba(cba)^{\sharp}ac(bac)^{\sharp}\,c\,(bac)\,ba\,(cba)\,cb =0.
\end{aligned}
\]
Consequently, $abc=0$ implies $acb=0$, and $R$ is symmetric.
\end{proof}

It is well known that an element $a$ in a unital ring $R$ is group invertible
if and only if $a\in a^{2}R\cap Ra^{2}$.

The next result constitutes one of the main theorems of this section,
which gives a new characterization of symmetric rings in terms of group invertibility of triple products.

\begin{theorem}\label{thm2.03}
Let $R$ be a unital ring. The following statements are equivalent:

(1) $R$ is symmetric.

(2) $abc \in R^{\sharp}$ implies $acb\in R^{\sharp}$ for all $a,b,c\in R$.
\end{theorem}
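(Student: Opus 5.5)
The implication $(2)\Rightarrow(1)$ is immediate from Proposition \ref{pro2.02}. If $abc=0$, then $abc=0\in R^{\sharp}$, so (2) gives $acb\in R^{\sharp}$. This is exactly condition (3) of Proposition \ref{pro2.02}, so $R$ is symmetric.

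For $(1)\Rightarrow(2)$ I would reduce to a unit problem in a corner ring.

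\textbf{Step 1 (setup).} Let $R$ be symmetric and put $x=abc\in R^{\sharp}$ and $e=xx^{\sharp}=x^{\sharp}x$, an idempotent. Symmetric rings are reversible (take $a=1$ in the definition), and reversible rings are abelian, so $e$ is central. Then $eR$ is a ring with identity $e$. It is again symmetric, being a subring (up to the identity) of $R$ in which zero products are computed as in $R$. In $eR$ we have $x=ex$ and $x^{\sharp}=ex^{\sharp}$, and $x$ is a unit of $eR$ with inverse $x^{\sharp}$.

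\textbf{Step 2 (the part of $acb$ outside $eR$ vanishes).} From $(1-e)x=0$ we get $\bigl((1-e)a\bigr)bc=0$. Symmetry then gives $(1-e)acb=0$, so $y:=acb=ey$ lies in $eR$.

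\textbf{Step 3 (the key step).} I must show $y$ is a unit of $eR$; I expect this to be the main obstacle.

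First I check that a reversible ring $S$ is Dedekind finite. Suppose $st=1$. Then $s(1-ts)=0$, and reversibility gives $(1-ts)s=0$, that is, $s=tss$. Multiplying on the right by $t$ gives $1=st=tsst=ts$.

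Now apply this in $S=eR$. We have $(ea)(eb)(ec)=x$, a unit of $eR$. So $ea$ has a right inverse $(eb)(ec)x^{\sharp}$ in $eR$, and $ec$ has a left inverse $x^{\sharp}(ea)(eb)$. Since $eR$ is reversible, hence Dedekind finite, both $ea$ and $ec$ are units of $eR$. Then $eb=(ea)^{-1}x(ec)^{-1}$ is also a unit. Hence $y=(ea)(ec)(eb)$ is a unit of $eR$; call its inverse $w\in eR$.

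\textbf{Step 4 (conclusion).} Since $e$ is central and $w=ew$, we get $yw=wy=e$, $ywy=ey=y$ and $wyw=ew=w$. So $w=(acb)^{\sharp}$, and $acb\in R^{\sharp}$.
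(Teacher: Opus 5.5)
Your proof is correct, and it takes a genuinely different route from the paper's.

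The paper first invokes Lemma \ref{lem2.01} to get $bca,cab\in R^{\sharp}$. It then uses symmetry and abelianness to write $acb=abc(abc)^{\sharp}acb=acb\,abc(abc)^{\sharp}$. Finally, by shuffling the central idempotents $bca(bca)^{\sharp}$ and $cab(cab)^{\sharp}$, it produces explicit factorizations showing $acb\in(acb)^{2}R\cap R(acb)^{2}$.

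You use symmetry at exactly the same point: your Step 2 is the identity $acb=abc(abc)^{\sharp}acb$. After that, you pass to the corner ring $eR$, where group invertibility becomes ordinary invertibility. All the bookkeeping is then replaced by the two-line fact that reversible rings are Dedekind finite, which forces $ea$, $eb$, $ec$ to be units of $eR$.

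Your route avoids Lemma \ref{lem2.01} altogether and gives more than the statement. Since $yw=e$, you get $acb(acb)^{\sharp}=abc(abc)^{\sharp}$ immediately; this is Lemma \ref{lem2.05}, which the paper proves by a second long computation. Unwinding $(ea)^{-1}=bc(abc)^{\sharp}$, $(ec)^{-1}=(abc)^{\sharp}ab$ and $(eb)^{-1}=c(abc)^{\sharp}a$ gives the explicit formula $(acb)^{\sharp}=c(abc)^{\sharp}a(abc)^{\sharp}ab^{2}c(abc)^{\sharp}$, which involves only $(abc)^{\sharp}$. Also, in a symmetric ring $1-e$ annihilates every reordering of $abc$. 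So the same argument shows that all six products of $a,b,c$ are group invertible with the common spectral idempotent $e$.

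The paper's approach, in turn, stays entirely inside $R$. It works only with the criterion $x\in R^{\sharp}\iff x\in x^{2}R\cap Rx^{2}$, in the Cline-formula spirit that motivates the paper, at the price of much heavier computation.
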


\begin{proof}
$(1)\Rightarrow(2)$. Suppose $abc \in R^{\sharp}$. By Lemma \ref{lem2.01},
we have $bca \in R^{\sharp}$ and $cab \in R^{\sharp}$.
Since $abc \in R^{\sharp}$, we have $(1-abc(abc)^{\sharp})abc=0$.
As $R$ is symmetric, this yields $(1-abc(abc)^{\sharp})acb=0$, i.e., $acb=abc(abc)^{\sharp}acb$.
Moreover, symmetric rings are abelian, so $acb=acb\,abc(abc)^{\sharp}$.

Observe that
\[
\begin{aligned}
abc(abc)^{\sharp}&=ab(cab)c(abc)^{\sharp}(abc)^{\sharp}\\
&=ab[cab(cab)^{\sharp}]cabc(abc)^{\sharp}(abc)^{\sharp}\\
&=a[cab(cab)^{\sharp}](bca)bc(abc)^{\sharp}(abc)^{\sharp}\\
&=a[cab(cab)^{\sharp}][bca(bca)^{\sharp}]bcabc(abc)^{\sharp}(abc)^{\sharp}\\
&=ac[bca(bca)^{\sharp}]ab(cab)^{\sharp}bcabc(abc)^{\sharp}(abc)^{\sharp}\\
&=acbca(bca)^{\sharp}ab(cab)^{\sharp}bc(abc)^{\sharp}.
\end{aligned}
\]
Therefore,
\begin{equation}\label{eq:2-1}
acb=acb\,abc(abc)^{\sharp}=(acb)^{2}ca(bca)^{\sharp}ab(cab)^{\sharp}bc(abc)^{\sharp}\in (acb)^{2}R.
\end{equation}
Similarly,
\[
\begin{aligned}
abc(abc)^{\sharp}&=(abc)^{\sharp}(abc)^{\sharp}ab(cab)c\\
&=(abc)^{\sharp}(abc)^{\sharp}abcab\bigl[(cab)^{\sharp}cab\bigr]c\\
&=(abc)^{\sharp}\bigl[(abc)^{\sharp}abc\bigr]abc\bigl[(cab)^{\sharp}cab\bigr]\\
&=(abc)^{\sharp}abc(cab)^{\sharp}cab\bigl[(abc)^{\sharp}abc\bigr]b\\
&=(abc)^{\sharp}abc(cab)^{\sharp}(cab)^{\sharp}cabca\bigl[(abc)^{\sharp}abc\bigr]b\\
&=(abc)^{\sharp}abc(cab)^{\sharp}(cab)^{\sharp}cabca\bigl[(bca)^{\sharp}bca\bigr]\bigl[(abc)^{\sharp}abc\bigr]b\\
&=(abc)^{\sharp}abc(cab)^{\sharp}(cab)^{\sharp}cabca(abc)^{\sharp}ab\bigl[(bca)^{\sharp}bca\bigr]cb.
\end{aligned}
\]
Hence,
\begin{equation}\label{eq:2-2}
acb=(abc)^{\sharp}abc(cab)^{\sharp}(cab)^{\sharp}cabca(abc)^{\sharp}ab\bigl[(bca)^{\sharp}bca\bigr]cb\,acb\in R(acb)^{2}.
\end{equation}
It then follows from \eqref{eq:2-1} and \eqref{eq:2-2} that $acb\in (acb)^{2}R \cap R(acb)^{2}$, so $acb\in R^{\sharp}$.

$(2)\Rightarrow(1)$.
If $abc=0$, then condition (2) gives $acb\in R^{\sharp}$.
Applying Proposition \ref{pro2.02}, we conclude that $R$ is symmetric.
\end{proof}

Recall that a ring $R$ is strongly regular if and only if every element of $R$ is group invertible.
The following corollary is an immediate consequence of Theorem \ref{thm2.03}.

\begin{corollary}\label{cor2.04}
Every strongly regular ring is symmetric.
\end{corollary}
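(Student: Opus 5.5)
The plan is to derive this directly from Theorem \ref{thm2.03}, specifically from its implication $(2)\Rightarrow(1)$. Let $R$ be a strongly regular ring. By the recalled characterization, every element of $R$ is group invertible, so $R^{\sharp}=R$.

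Next, check condition (2) of Theorem \ref{thm2.03}. Take any $a,b,c\in R$ with $abc\in R^{\sharp}$. The product $acb$ is an element of $R$, hence $acb\in R^{\sharp}$. The implication therefore holds trivially, because its conclusion is always true.

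Theorem \ref{thm2.03} then gives that $R$ is symmetric. The same argument can also be run through Proposition \ref{pro2.02}, implication $(3)\Rightarrow(1)$: whenever $abc=0$ we have $acb\in R=R^{\sharp}$, which yields symmetry.

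There is no genuine obstacle here. The only point that needs care is that the cited characterization of strongly regular rings really means every element is group invertible. This is the standard fact that $a\in a^{2}R\cap Ra^{2}$ for all $a$ in a strongly regular ring, and the paper recalls exactly this description.
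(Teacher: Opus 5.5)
Your proposal is correct and matches the paper's argument: since every element of a strongly regular ring is group invertible, condition (2) of Theorem \ref{thm2.03} holds trivially, and the implication $(2)\Rightarrow(1)$ gives symmetry. Your alternative via Proposition \ref{pro2.02} $(3)\Rightarrow(1)$ is not a different route, because that is exactly the step the paper uses to prove $(2)\Rightarrow(1)$ in Theorem \ref{thm2.03}.
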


\begin{lemma}\label{lem2.05}
If $R$ is symmetric and $abc\in R^{\sharp}$, then $abc(abc)^{\sharp}=acb(acb)^{\sharp}$.
\end{lemma}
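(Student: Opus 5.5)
By Theorem \ref{thm2.03}, $acb\in R^{\sharp}$. So both $e:=abc(abc)^{\sharp}$ and $f:=acb(acb)^{\sharp}$ are defined. Both are idempotents, and since symmetric rings are abelian, both are central. The plan is to prove $f=ef$ and $e=fe$; centrality then gives $e=fe=ef=f$.

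For $f=ef$: from $(1-e)abc=0$ and symmetry we get $(1-e)acb=0$, exactly as in the proof of Theorem \ref{thm2.03}. Hence $acb=e\,acb$. Since $e$ is central,
\[
f=(acb)^{\sharp}\,acb\,(acb)^{\sharp}=(acb)^{\sharp}\,e\,acb\,(acb)^{\sharp}=ef .
\]

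For $e=fe$: I argue symmetrically with the roles of $b$ and $c$ swapped. Now $(1-f)acb=0$ with $acb\in R^{\sharp}$, and symmetry applied to the triple $(1-f)a,\,c,\,b$ gives $(1-f)abc=0$. Thus $abc=f\,abc$, and since $f$ is central,
\[
e=(abc)^{\sharp}\,abc\,(abc)^{\sharp}=fe .
\]

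Combining, $e=fe=ef=f$, which is the claim. There is no real obstacle beyond two inputs already available: that $acb$ is group invertible, supplied by Theorem \ref{thm2.03}, and that symmetric rings are abelian, so all idempotents are central. The only point needing care is checking that the symmetry condition is applied with the correct grouping of the three factors, namely left factor $(1-e)a$ or $(1-f)a$ and the two remaining factors swapped.
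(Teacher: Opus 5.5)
Your argument is correct and takes a shorter route than the paper's, but fix the two displays. $(acb)^{\sharp}\,acb\,(acb)^{\sharp}$ equals $(acb)^{\sharp}$, not $f$. Likewise $(abc)^{\sharp}\,abc\,(abc)^{\sharp}=(abc)^{\sharp}$, which is not $e$ in general. What you want is to right-multiply $acb=e\,acb$ and $abc=f\,abc$ by the respective group inverses:
$f=acb\,(acb)^{\sharp}=e\,acb\,(acb)^{\sharp}=ef$ and $e=abc\,(abc)^{\sharp}=f\,abc\,(abc)^{\sharp}=fe$.
These two identities need no centrality. Centrality of idempotents enters only at the end, to get $ef=fe$; without it, $ef=f$ and $fe=e$ would only give $eR=fR$.

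The paper shares your first step: $acb=e\,acb$, hence $f=ef$. For the reverse direction it does not apply the annihilator trick to $acb$. Instead it uses Lemma \ref{lem2.01} to know that the rotations $bca$ and $cab$ are group invertible. It then runs a long chain moving the central idempotents $bca(bca)^{\sharp}$ and $cab(cab)^{\sharp}$ through the word to show $ef=eg$ with $g=cab(cab)^{\sharp}$. Finally, symmetry applied to $(1-g)cab=0$ gives $(1-g)abc=0$, so $eg=e$.

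You replace all of this with the same one-line trick, with $b$ and $c$ interchanged, applied to $(1-f)a\cdot c\cdot b=0$. This is legitimate because $acb\in R^{\sharp}$ is already guaranteed by Theorem \ref{thm2.03}, and the statement needs that anyway for $(acb)^{\sharp}$ to make sense. Your version also shows the lemma is an instance of a general fact: in an abelian ring, if $x,y\in R^{\sharp}$ satisfy $(1-xx^{\sharp})y=0$ and $(1-yy^{\sharp})x=0$, then $xx^{\sharp}=yy^{\sharp}$. The paper's detour through the cyclic rotations is not needed for this lemma.
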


\begin{proof}
From the proof of Theorem \ref{thm2.03}, we have $acb=abc(abc)^{\sharp}acb$, which yields
\[
acb(acb)^{\sharp}=abc(abc)^{\sharp}acb(acb)^{\sharp}.
\]
Observe that
\[
\begin{aligned}
abc(abc)^{\sharp}acb(acb)^{\sharp}
&=abcabc(abc)^{\sharp}(abc)^{\sharp}\bigl[acb(acb)^{\sharp}\bigr]\\
&=ab\bigl[cab(cab)^{\sharp}\bigr]cabc(abc)^{\sharp}(abc)^{\sharp}\bigl[acb(acb)^{\sharp}\bigr]\\
&=a\bigl[cab(cab)^{\sharp}\bigr]bcabc(abc)^{\sharp}(abc)^{\sharp}\bigl[acb(acb)^{\sharp}\bigr]\\
&=acab(cab)^{\sharp}\bigl[bca(bca)^{\sharp}\bigr]bcabc(abc)^{\sharp}(abc)^{\sharp}\bigl[acb(acb)^{\sharp}\bigr]\\
&=ac\bigl[bca(bca)^{\sharp}\bigr]ab(cab)^{\sharp}bcabc(abc)^{\sharp}(abc)^{\sharp}\bigl[acb(acb)^{\sharp}\bigr]\\
&=\bigl[acb(acb)^{\sharp}\bigr]acbca(bca)^{\sharp}ab(cab)^{\sharp}bcabc(abc)^{\sharp}(abc)^{\sharp}\\
&=ac\bigl[bca(bca)^{\sharp}\bigr]ab(cab)^{\sharp}bcabc(abc)^{\sharp}(abc)^{\sharp}\\
&=acab(cab)^{\sharp}\bigl[bca(bca)^{\sharp}\bigr]bcabc(abc)^{\sharp}(abc)^{\sharp}\\
&=a\bigl[cab(cab)^{\sharp}\bigr]bc(abc)^{\sharp}\\
&=abc(abc)^{\sharp}\bigl[cab(cab)^{\sharp}\bigr].
\end{aligned}
\]
Furthermore, $abc(abc)^{\sharp}\bigl[cab(cab)^{\sharp}\bigr]=abc(abc)^{\sharp}$.
This equality follows from the symmetry of $R$: since $(1-cab(cab)^{\sharp})cab=0$,
we have $(1-cab(cab)^{\sharp})abc=0$.
Therefore
\[
acb(acb)^{\sharp}=abc(abc)^{\sharp}acb(acb)^{\sharp}
=abc(abc)^{\sharp}\bigl[cab(cab)^{\sharp}\bigr]
=abc(abc)^{\sharp},
\]
which completes the proof.
\end{proof}

Exploiting the identity established in Lemma \ref{lem2.05},
we obtain an EP-element-based characterization of symmetric rings  within the setting of $\ast$-rings,
as stated in the following theorem.

\begin{theorem}\label{thm2.07}
Let $R$ be a $\ast$-ring. The following statements are equivalent:

(1) $R$ is symmetric.

(2) $abc \in R^{\mathrm{EP}}$ implies $acb\in R^{\mathrm{EP}}$ for all $a,b,c\in R$.
\end{theorem}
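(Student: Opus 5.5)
We prove the two implications separately, using the standard characterization of EP elements: $a\in R^{\mathrm{EP}}$ if and only if $a\in R^{\sharp}$ and $aa^{\sharp}$ is a projection, i.e. $(aa^{\sharp})^{\ast}=aa^{\sharp}$. We first record why this holds. If $a^{\sharp}=a^{\dag}$, then $aa^{\sharp}=aa^{\dag}$ is self-adjoint. Conversely, if $a\in R^{\sharp}$ and $aa^{\sharp}=a^{\sharp}a$ is self-adjoint, then $x=a^{\sharp}$ satisfies all four Penrose equations. Hence $x=a^{\dag}$ and $a$ is EP.

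\textbf{$(1)\Rightarrow(2)$.} Suppose $R$ is symmetric and $abc\in R^{\mathrm{EP}}$.
\begin{itemize}
\item Since $abc\in R^{\sharp}$, Theorem \ref{thm2.03} gives $acb\in R^{\sharp}$.
\item Lemma \ref{lem2.05} gives $acb(acb)^{\sharp}=abc(abc)^{\sharp}$.
\item The right-hand side is a projection because $abc$ is EP.
\end{itemize}
So $acb\,(acb)^{\sharp}$ is self-adjoint, and by the characterization above $acb\in R^{\mathrm{EP}}$.

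\textbf{$(2)\Rightarrow(1)$.} We mimic Proposition \ref{pro2.02}.
\begin{itemize}
\item Let $abc=0$. Since $0$ is EP (with $0^{\sharp}=0^{\dag}=0$), hypothesis (2) gives $acb\in R^{\mathrm{EP}}\subseteq R^{\sharp}$.
\item Thus condition (3) of Proposition \ref{pro2.02} holds for all $a,b,c$, and that proposition yields that $R$ is symmetric.
\end{itemize}

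The only point needing care is the EP characterization, which is routine. The real weight of the argument lies in the earlier results: the group-inverse transfer in Theorem \ref{thm2.03} and the equality of idempotents in Lemma \ref{lem2.05}. Granting these, no step presents a serious obstacle.
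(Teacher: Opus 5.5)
Your proof is correct and follows the paper's argument: $(1)\Rightarrow(2)$ uses Theorem \ref{thm2.03} together with Lemma \ref{lem2.05}, and $(2)\Rightarrow(1)$ takes $abc=0$ and invokes condition (3) of Proposition \ref{pro2.02}. The only addition is that you explicitly check the standard fact that an element $x\in R^{\sharp}$ with $xx^{\sharp}$ self-adjoint is EP, which the paper uses without comment when it says that $acb(acb)^{\sharp}=abc(abc)^{\dag}$ ``implies'' $acb\in R^{\mathrm{EP}}$.
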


\begin{proof}
$(1)\Rightarrow(2)$. Suppose that $abc\in R^{\mathrm{EP}}$.
By Theorem \ref{thm2.03}, we obtain $acb\in R^{\sharp}$.
From Lemma \ref{lem2.05},
\[
acb(acb)^{\sharp}=abc(abc)^{\sharp}.
\]
Since $abc\in R^{\mathrm{EP}}$, we have $abc(abc)^{\sharp}=abc(abc)^{\dag}$.
Therefore $acb(acb)^{\sharp}=abc(abc)^{\dag}$, which implies that $acb\in R^{\mathrm{EP}}$.

$(2)\Rightarrow(1)$.
If $abc=0$, then $abc\in R^{\mathrm{EP}}$.
Condition $(2)$ yields $acb\in R^{\mathrm{EP}}$, and in particular $acb\in R^{\sharp}$.
Applying Proposition \ref{pro2.02}, we conclude that $R$ is symmetric.
\end{proof}

\begin{remark}
We emphasize that the hypotheses of Theorem \ref{thm2.03} and Theorem \ref{thm2.07} do not force the equality $abc=acb$.
Indeed, these theorems merely guarantee that group invertibility (resp., the EP property) of the product $abc$ is inherited by $acb$.
A concrete counterexample for this point will be provided in Remark \ref{rem2.09} using the real quaternion division ring $\mathbb{H}$.
\end{remark}

\begin{proposition}\label{pro2.08}
Let $R$ be a unital ring. The following statements are equivalent:

(1) $R$ is symmetric.

(2) $abc \in E(R)$ implies $acb\in R^{\sharp}$ for all $a,b,c\in R$.
\end{proposition}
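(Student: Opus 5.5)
(1)$\Rightarrow$(2) follows directly from Theorem \ref{thm2.03}. Every idempotent $e$ is group invertible with $e^{\sharp}=e$, since $e\cdot e\cdot e=e$, $e e e = e$ and $ee=ee$. So if $R$ is symmetric and $abc\in E(R)$, then $abc\in R^{\sharp}$, and Theorem \ref{thm2.03} gives $acb\in R^{\sharp}$.

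For (2)$\Rightarrow$(1) the plan is to reduce to Proposition \ref{pro2.02}, whose condition (3) reads: $abc=0$ implies $acb\in R^{\sharp}$. Take $a,b,c\in R$ with $abc=0$. Then $abc=0$ is an idempotent, so $abc\in E(R)$. Hypothesis (2) then yields $acb\in R^{\sharp}$. Since $a,b,c$ were arbitrary, condition (3) of Proposition \ref{pro2.02} holds, and that proposition gives that $R$ is symmetric.

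I expect no real obstacle here. All the substantive work is already contained in Theorem \ref{thm2.03}, which handles the transfer of group invertibility in symmetric rings, and in Proposition \ref{pro2.02}, which shows that the weak condition at $0$ already forces reversibility and then symmetry. The only points that need checking are the trivial ones: that $0\in E(R)$, and that every idempotent is group invertible with itself as group inverse.
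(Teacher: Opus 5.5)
Your proposal is correct and follows the paper's proof exactly. For (1)$\Rightarrow$(2) you note $E(R)\subseteq R^{\sharp}$ and apply Theorem \ref{thm2.03}, a step the paper leaves implicit. For (2)$\Rightarrow$(1) you use $0\in E(R)$ to obtain condition (3) of Proposition \ref{pro2.02}, which then gives symmetry.
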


\begin{proof}
$(1)\Rightarrow(2)$. This follows directly from Theorem \ref{thm2.03}.

$(2)\Rightarrow(1)$. If $abc=0$, then $abc\in E(R)$. By condition $(2)$, we obtain $acb\in R^{\sharp}$.
The conclusion now follows from the implication $(3)\Rightarrow(1)$ of Proposition \ref{pro2.02}.
\end{proof}

\begin{remark}\label{rem2.09}
Herein, it is natural to introduce the following two conditions:
\begin{align*}
& \text{Condition 1}: abc\in R^{\sharp} \text{ implies } acb\in E(R) \text{ for all }a,b,c\in R;\\
& \text{Condition 2}: abc\in E(R) \text{ implies } acb\in E(R) \text{ for all }a,b,c\in R.
\end{align*}
We now prove the implications
\[
\text{Condition 1}\implies \text{Condition 2}\implies R\text{ is symmetric}.
\]

\noindent\textbf{Condition 1} $\Rightarrow$ \textbf{Condition 2}:
If $abc\in E(R)$, then $abc\in R^{\sharp}$ since $E(R)\subseteq R^{\sharp}$.
By Condition 1, we obtain $acb\in E(R)$.

\noindent\textbf{Condition 2} $\Rightarrow$ $R$ is symmetric:
Suppose $abc=0$. Since $0\in E(R)$, Condition 2 yields $acb\in E(R)$.
Combined with the equivalence $(1)\Leftrightarrow(2)$ in Proposition \ref{pro2.02},
we conclude that $R$ is symmetric.

We next show that the reverse implications are not valid in general.

Firstly, Condition 2 does not imply Condition 1.
Take $R=\mathbb{Z}_{3}$.
As $R$ is commutative, Condition 2 is automatically satisfied.
Notice that $[2]\in R^{\sharp}$ while $[2]$ is not an idempotent element.
Set $a=[1],\,b=[1],\,c=[2]$. Then $abc=[2]\in R^{\sharp}$,
but $acb=[2]\notin E(R)$.
Hence Condition 1 fails.

Secondly, the symmetry of $R$ does not imply Condition 2.
Let $R=\mathbb{H}$ denote the real quaternion division ring, whose elements are of the form
\[
q = a + b\mathbf{i} + c\mathbf{j} + d\mathbf{k},\quad a,b,c,d\in\mathbb{R},
\]
with multiplication rules
\begin{align*}
& \mathbf{i}^2=\mathbf{j}^2=\mathbf{k}^2=\mathbf{i}\mathbf{j}\mathbf{k}=-1,\quad
\mathbf{i}\mathbf{j}=\mathbf{k},\;\mathbf{j}\mathbf{k}=\mathbf{i},\;\mathbf{k}\mathbf{i}=\mathbf{j},\\
& \mathbf{j}\mathbf{i}=-\mathbf{k},\;\mathbf{k}\mathbf{j}=-\mathbf{i},\;\mathbf{i}\mathbf{k}=-\mathbf{j}.
\end{align*}
$\mathbb{H}$ is a non-commutative division ring (skew-field) over $\mathbb{R}$.
Every nonzero element of $\mathbb{H}$ is invertible, so $\mathbb{H}$ is strongly regular.
By Corollary \ref{cor2.04}, $\mathbb{H}$ is symmetric.
Now choose $a=\mathbf{i},\;b=\mathbf{j},\;c=-\mathbf{k}$.
A direct computation yields $abc=1\in E(R)$, whereas $acb=-1\notin E(R)$.
\end{remark}

\begin{remark}\label{rem2.10}
If $R$ satisfies Condition 2, i.e.,
\[
abc\in E(R)\text{ implies }acb\in E(R)\text{ for all }a,b,c\in R,
\]
then $abc=acb$.

Indeed, $R$ is symmetric under this hypothesis.
From the argument given in Lemma \ref{lem2.05}, we obtain
\[
abc\,(abc)^{\sharp}=acb\,(acb)^{\sharp}.
\]
Since $abc,acb\in E(R)$, the group inverse of any idempotent element coincides with itself, namely $(abc)^{\sharp}=abc$ and $(acb)^{\sharp}=acb$.
Consequently, $abc=acb$.
\end{remark}

\begin{remark}\label{rem2.11}
In \cite[Problem 2.9]{MW1}, the authors raise the following problem:
\[
\text{Let }R\text{ be a symmetric ring and }e\in E(R).\text{ If }e=abc,\text{ does }e=acb\text{ hold?}
\]
In view of Remark \ref{rem2.09} and Remark \ref{rem2.10},
we obtain a negative answer to Problem 2.9 from \cite{MW1}.
\end{remark}

\begin{example}
(1) Let $R=\mathbb{H}$ denote the real quaternion division ring.
Then $R$ is strongly regular, yet it satisfies neither Condition 1 nor Condition 2.

(2) Let $R=\mathbb{Z}$ denote the ring of integers.
Then $R$ satisfies Condition 2, but $R$ is not strongly regular.

(3) Let $R=\mathbb{F}_2[x]$ be the polynomial ring over the binary field $\mathbb{F}_2$ in one indeterminate $x$.
Clearly, $R$ is a commutative ring.
An element of $\mathbb{F}_2[x]$ is group invertible if and only if it is a constant polynomial.
Hence $R^{\sharp}=\{0,1\}$, and every element in this set is idempotent.
Thus $R$ satisfies Condition 1, but $R$ fails to be strongly regular.
For instance, the indeterminate $x$ possesses no group inverse.
\end{example}

From Remark \ref{rem2.09}, we observe that
if $R$ is symmetric and $abc\in E(R)$, then $abc \neq acb$ in general.
Similarly, even for a symmetric ring $R$, the equality $abc=acb$ need not hold whenever $abc\in R^{\sharp}$.
In what follows, we establish several results concerning condition 2 from Remark \ref{rem2.09}.

\begin{proposition}\label{pro2.13}
Let $R$ be a unital ring. The following statements are equivalent:

(1) $abc\in E(R)$ implies $acb=abc$ for all $a,b,c\in R$.

(2) $abc\in E(R)$ implies $acb \in E(R)$ for all $a,b,c\in R$.

(3) $abc\in R^{\sharp}$ implies $acb=abc$ for all $a,b,c\in R$.
\end{proposition}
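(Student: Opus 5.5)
The plan is to prove the cycle $(1)\Rightarrow(2)\Rightarrow(1)$ and then $(1)\Rightarrow(3)\Rightarrow(1)$.

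\textbf{The easy implications.}
$(1)\Rightarrow(2)$ is immediate: if $abc\in E(R)$, then (1) gives $acb=abc\in E(R)$.

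$(2)\Rightarrow(1)$ is exactly Remark \ref{rem2.10}. Condition 2 forces $R$ to be symmetric (Remark \ref{rem2.09}). Then the argument of Lemma \ref{lem2.05} gives $abc(abc)^{\sharp}=acb(acb)^{\sharp}$. Both $abc$ and $acb$ are idempotents, hence equal to their own group inverses, so $abc=acb$.

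$(3)\Rightarrow(1)$ is trivial because $E(R)\subseteq R^{\sharp}$.

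\textbf{The real content: $(1)\Rightarrow(3)$.}
Assume (1). Since (1) implies (2), Remark \ref{rem2.09} shows that $R$ is symmetric. Let $s=abc\in R^{\sharp}$ and $e=ss^{\sharp}=s^{\sharp}s\in E(R)$. The key trick is to absorb the group inverse into the first factor so that (1) applies to an idempotent triple product:
\[
(s^{\sharp}a)\,b\,c=s^{\sharp}s=e\in E(R).
\]
Applying (1) to the triple $s^{\sharp}a,\;b,\;c$ yields $s^{\sharp}acb=e$. Multiplying on the left by $s$ gives
\[
e\,acb=ss^{\sharp}acb=se=s=abc.
\]

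It then remains to remove the factor $e$. Because $R$ is symmetric and $(1-e)abc=(1-e)s=0$, we get $(1-e)acb=0$, exactly as in the proof of Theorem \ref{thm2.03}. Hence $acb=e\,acb=abc$, which is (3).

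The only step I expect to need care is the choice of the triple $(s^{\sharp}a,b,c)$ in this last implication, together with the observation that symmetry (available from Remark \ref{rem2.09}) supplies $(1-e)acb=0$. Everything else is bookkeeping.
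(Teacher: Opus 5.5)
Your proof is correct and follows the paper's route. For $(1)\Rightarrow(3)$ you use the same key step of applying (1) to the triple $\bigl((abc)^{\sharp}a,\,b,\,c\bigr)$, and the same symmetry argument giving $(1-e)acb=0$. Your version of that implication is slightly leaner: multiplying $s^{\sharp}acb=e$ on the left by $s$ already yields $e\,acb=abc$, so the paper's detour through Theorem~\ref{thm2.03} and Lemma~\ref{lem2.05} (group invertibility of $acb$ and $abc(abc)^{\sharp}=acb(acb)^{\sharp}$) is not needed there.
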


\begin{proof}
It is clear that $(1)\Leftrightarrow(2)$.
Indeed, the implication $(2)\Rightarrow(1)$ follows from Remark \ref{rem2.10}.

$(1)\text{ or }(2)\Rightarrow(3)$.
First, it follows from Remark \ref{rem2.09} that $R$ is symmetric.
Assume that $abc\in R^{\sharp}$. By Theorem \ref{thm2.03}, we have $acb\in R^{\sharp}$.
Moreover, Lemma \ref{lem2.05} yields $(abc)^{\sharp}abc=(acb)^{\sharp}acb$.
Since $(abc)^{\sharp}abc\in E(R)$, condition (1) implies $(abc)^{\sharp}abc=(abc)^{\sharp}acb$.
Hence $(abc)^{\sharp}acb=(acb)^{\sharp}acb$.
Multiplying both sides on the left by $abc$, we obtain
\[
abc(abc)^{\sharp}acb=abc(acb)^{\sharp}acb.
\]
We now verify the two equalities below.
Because $(1-acb(acb)^{\sharp})acb=0$ and $R$ is symmetric,
we get $(1-acb(acb)^{\sharp})abc=0$, which implies $abc(acb)^{\sharp}acb=abc$.
Similarly, $abc(abc)^{\sharp}acb=acb$.
Combining these identities, we conclude $abc=acb$.

$(3)\Rightarrow(1)$. If $abc\in E(R)$, then $abc\in R^{\sharp}$.
Condition (3) now forces $acb=abc$, and hence $acb\in E(R)$.
\end{proof}

\section{\normalsize Characterizations of Symmetric Rings by projections}

In Theorem \ref{thm2.07}, we have investigated characterizations of symmetric rings in terms of EP elements.
As is well known, this approach requires $R$ to be a $\ast$-ring.
This section further explores symmetric rings within $\ast$-rings,
establishing new characterizations by means of Moore-Penrose invertibility and projections.

\begin{proposition}\label{pro3.01}
Let $R$ be a $\ast$-ring. The following statements are equivalent:

(1) $R$ is symmetric.

(2) $abc=0$ implies $acb\in P(R)$ for all $a,b,c\in R$.

(3) $abc=0$ implies $acb \in R^{\mathrm{EP}}$ for all $a,b,c\in R$.
\end{proposition}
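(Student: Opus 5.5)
I plan to prove the cycle $(1)\Rightarrow(2)\Rightarrow(3)\Rightarrow(1)$, modelled on Proposition \ref{pro2.02}.

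For $(1)\Rightarrow(2)$: if $R$ is symmetric and $abc=0$, then $acb=0$. Since $0^{2}=0=0^{\ast}$, we get $0\in P(R)$, so $acb\in P(R)$.

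For $(2)\Rightarrow(3)$: I will check that every projection is an EP element. Let $p\in P(R)$ and take $x=p$. Then $pxp=p^{3}=p$ and $xpx=p$. Also $(px)^{\ast}=p^{\ast}=p=px$, and likewise $(xp)^{\ast}=xp$, so $p^{\dag}=p$. Moreover $px=xp$, so $p^{\sharp}=p$. Hence $p^{\sharp}=p^{\dag}$ and $p\in R^{\mathrm{EP}}$, which gives $P(R)\subseteq R^{\mathrm{EP}}$. Thus if $abc=0$, condition (2) gives $acb\in P(R)\subseteq R^{\mathrm{EP}}$.

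For $(3)\Rightarrow(1)$: assume $abc=0$. By (3), $acb\in R^{\mathrm{EP}}\subseteq R^{\sharp}$. So the hypothesis of Proposition \ref{pro2.02}(3) holds for all $a,b,c$: whenever $abc=0$ we have $acb\in R^{\sharp}$. The implication $(3)\Rightarrow(1)$ of Proposition \ref{pro2.02} then shows that $R$ is symmetric.

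There is no real obstacle. The only point needing care is the inclusion $P(R)\subseteq R^{\mathrm{EP}}$, and this is the routine verification above. All the substantive work, namely showing that reversibility forces the ring to be abelian and the vanishing computation, is already contained in Proposition \ref{pro2.02}.
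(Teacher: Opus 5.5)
Your proposal is correct and matches the paper's proof: $(1)\Rightarrow(2)\Rightarrow(3)$ are immediate, and $(3)\Rightarrow(1)$ reduces to Proposition~\ref{pro2.02} via $R^{\mathrm{EP}}\subseteq R^{\sharp}$. Your added check that $P(R)\subseteq R^{\mathrm{EP}}$ is valid but more than needed; for $(2)\Rightarrow(3)$ only the fact $0\in R^{\mathrm{EP}}$ is used.
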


\begin{proof}
The implications $(1)\Rightarrow(2)$ and $(2)\Rightarrow(3)$ are obvious.
The converse $(3)\Rightarrow(1)$ follows from Proposition \ref{pro2.02}.
\end{proof}

Motivated by Theorem \ref{thm2.07} and Proposition \ref{pro3.01},
we introduce the following conditions concerning symmetric rings.

Let $R$ be a $\ast$-ring, and let $a,b,c\in R$. Consider the following statements:
\begin{enumerate}
    \item[(1)] $R$ is symmetric.
    \item[(2)] $abc \in P(R)$ implies $acb\in P(R)$ for all $a,b,c\in R$.
    \item[(3)] $abc=0$ implies $acb\in R^{\dag}$ for all $a,b,c\in R$.
    \item[(4)] $abc \in R^{\dag}$ implies $acb \in R^{\dag}$ for all $a,b,c\in R$.
\end{enumerate}

$(2)\Rightarrow(1)$. If $abc=0$, then $abc\in P(R)$, and hence $acb\in P(R)$.
By Proposition \ref{pro3.01}, we conclude that $R$ is symmetric.

Condition (1) does not imply Condition (2), as illustrated in Remark \ref{rem2.09}.
A suitable counterexample is given by the real quaternion division ring $\mathbb{H}$.
Note that an involution $\ast$ has not yet been specified on $\mathbb{H}$.
Nevertheless, for the chosen elements we have $abc=1$, which is a projection,
whereas $acb=-1$ is not idempotent and therefore cannot be a projection.

$(1)\Rightarrow(4)\Rightarrow (3)$.
First, we prove that if $R$ is symmetric, then every MP invertible element of $R$ is an EP element.
Let $x\in R^{\dag}$. Then there exists $x^{\dag}$ such that $xx^{\dag}x=x$.
We have $(1-xx^{\dag})x=0$ and $x(1-x^{\dag}x)=0$.
Since $R$ is symmetric, $(1-xx^{\dag})x=0$ yields $x(1-xx^{\dag})=0$.
Similarly, $x(1-x^{\dag}x)=0$ implies $(1-x^{\dag}x)x=0$.
It follows that $x=x^{2}x^{\dag}=x^{\dag}x^{2}$.
Therefore,
\[
x^{\dag}x=x^{\dag}x^{2}x^{\dag}=xx^{\dag},
\]
which shows $R^{\dag}=R^{\mathrm{EP}}$.

We now complete the proof of $(1)\Rightarrow(4)$.
If $abc \in R^{\dag}$, then $abc\in R^{\mathrm{EP}}$.
By Theorem \ref{thm2.07}, $acb\in R^{\mathrm{EP}}$, and hence $acb\in R^{\dag}$.
$(4)\Rightarrow(3)$. This is obvious.

$(1)\Rightarrow(4)\Rightarrow (3)$.
First, we claim that if $R$ is symmetric, then every MP-invertible element of $R$ is an EP element.
Let $x\in R^{\dag}$. Then there exists $x^{\dag}$ satisfying $xx^{\dag}x=x$.
We obtain $(1-xx^{\dag})x=0$ and $x(1-x^{\dag}x)=0$.
Since $R$ is symmetric, $(1-xx^{\dag})x=0$ yields $x(1-xx^{\dag})=0$.
Similarly, $x(1-x^{\dag}x)=0$ implies $(1-x^{\dag}x)x=0$.
It follows that $x=x^{2}x^{\dag}=x^{\dag}x^{2}$.
Therefore,
\[
x^{\dag}x=x^{\dag}x^{2}x^{\dag}=xx^{\dag},
\]
which shows $R^{\dag}=R^{\mathrm{EP}}$.

We now finish the proof of $(1)\Rightarrow(4)$.
If $abc \in R^{\dag}$, then $abc\in R^{\mathrm{EP}}$.
By Theorem \ref{thm2.07}, $acb\in R^{\mathrm{EP}}$, and hence $acb\in R^{\dag}$.

$(4)\Rightarrow(3)$. This is clear.

The example below demonstrates that the converses of $(1)\Rightarrow(3)$ and $(1)\Rightarrow(4)$ do not hold in general.

\begin{example}\label{exm3.02}
Let $R = M_2(\mathbb{C})$ be equipped
with the involution $\ast$ defined by conjugate transpose. Since $M_2(\mathbb{C})$ is $\ast$-regular,
the conditions $abc=0 \implies acb \in R^{\dag}$ and $abc\in R^{\dag} \implies acb \in R^{\dag}$ hold trivially.
Take
\[
a=
\begin{pmatrix}
1 & 1\\
0 & 0
\end{pmatrix},\quad
b=
\begin{pmatrix}
0 & 1\\
0 & 0
\end{pmatrix},\quad
c=
\begin{pmatrix}
1 & 0\\
0 & 0
\end{pmatrix}.
\]
Then $abc = 0$, while
\[
acb=
\begin{pmatrix}
0 & 1\\
0 & 0
\end{pmatrix}\ne 0.
\]
Consequently, $R$ fails to be symmetric.
\end{example}

\begin{question}\label{Q3.03}
Does Condition (3) imply Condition (4)?
\end{question}

From Remark \ref{rem2.09}, Remark \ref{rem2.10} and Proposition \ref{pro2.13},
we readily obtain the following result.

\begin{corollary}\label{cor3.04}
Let $R$ be a $\ast$-ring.
If $abc\in P(R)$ implies $acb \in P(R)$, then $acb=abc$ for all $a,b,c\in R$.
\end{corollary}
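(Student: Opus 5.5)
The plan is to read the conclusion as ``whenever $abc\in P(R)$, then $acb=abc$'' and to copy the argument of Remark \ref{rem2.10}, with projections in place of idempotents. The proof has two steps: first get symmetry of $R$ from the hypothesis, then use Lemma \ref{lem2.05} together with the fact that a projection is its own group inverse.

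\emph{Step 1: $R$ is symmetric.} This is the implication $(2)\Rightarrow(1)$ proved above. If $abc=0$, then $abc\in P(R)$, so by hypothesis $acb\in P(R)\subseteq R^{\mathrm{EP}}$. Proposition \ref{pro3.01} then gives that $R$ is symmetric. (Alternatively, $P(R)\subseteq E(R)$, so Proposition \ref{pro2.02}(2) applies directly.)

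\emph{Step 2: the identity.} Fix $a,b,c$ with $p:=abc\in P(R)$. By hypothesis $q:=acb\in P(R)$.
\begin{itemize}
\item Every idempotent $e$ is group invertible with $e^{\sharp}=e$, since $e\cdot e\cdot e=e$, $e\,e\,e=e$ and $e$ commutes with itself. Hence $p,q\in R^{\sharp}$, $p^{\sharp}=p$ and $q^{\sharp}=q$.
\item Since $R$ is symmetric and $abc\in R^{\sharp}$, Lemma \ref{lem2.05} gives $abc(abc)^{\sharp}=acb(acb)^{\sharp}$, that is, $pp^{\sharp}=qq^{\sharp}$.
\item Substituting the group inverses gives $p=p^{2}=q^{2}=q$, i.e.\ $acb=abc$.
\end{itemize}

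Equivalently, one can invoke Remark \ref{rem2.10} or Proposition \ref{pro2.13} after noting that the hypothesis yields symmetry. However, Proposition \ref{pro2.13} is stated for idempotents, not projections, so it is cleaner to redo the short computation above directly.

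The only point needing care is the scope of the conclusion. The projection hypothesis does not obviously give Condition 2 of Remark \ref{rem2.09}, because an idempotent value of $abc$ need not be a projection. So I would not try to derive $acb=abc$ for all $abc\in E(R)$ or all $abc\in R^{\sharp}$ through Proposition \ref{pro2.13}. I would state and prove the corollary exactly in the form above: whenever $abc$ is a projection, $acb=abc$.
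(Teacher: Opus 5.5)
Your proof is correct and is essentially the paper's own argument. The paper obtains the corollary by pointing to Remarks \ref{rem2.09} and \ref{rem2.10}: the hypothesis applied to $abc=0$ forces symmetry, and then Lemma \ref{lem2.05} together with $e^{\sharp}=e$ for idempotents gives $abc=acb$, which is exactly your Steps 1 and 2 with projections in place of idempotents. Your reading of the conclusion as conditional on $abc\in P(R)$ (which is how Proposition \ref{pro3.07} uses it) is right, and so is your caution that Proposition \ref{pro2.13} does not apply verbatim to the projection hypothesis.
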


Herein, we introduce a classes of like-symmetric rings, whose definition is given below.

\begin{definition}
$R$ is said to be \textit{pro-symmetric} if $abc\in P(R)$ implies $acb \in P(R)$ for all $a,b,c\in R$.
\end{definition}

In the preceding discussion, we have shown that
every pro-symmetric ring is symmetric,
but the converse does not hold in general.
Recall that a ring $R$ is symmetric provided that
for any $x_{1},x_{2},x_{3}\in R$, $x_{1}x_{2}x_{3}=0$ implies
$x_{\sigma(1)}x_{\sigma(2)}x_{\sigma(3)}=0$ for every permutation $\sigma\in S_{3}$,
where $S_3$ denotes the symmetric group on three elements.
Accordingly, we establish the following result for pro-symmetric rings.

\begin{lemma}\label{lem3.06}
Let $R$ be a $\ast$-ring, and let $a, b \in R$. The following statements are equivalent:

(1) $R$ is reversible.

(2) $ab \in P(R)$ implies $ab = ba$.
\end{lemma}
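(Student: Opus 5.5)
The plan is to prove the two implications separately. The direction $(2)\Rightarrow(1)$ is immediate, and $(1)\Rightarrow(2)$ follows from the abelian property of reversible rings plus a short manipulation.

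\textbf{$(2)\Rightarrow(1)$.} Take $x,y\in R$ with $xy=0$. Since $0\in P(R)$, condition (2) applied to the pair $(x,y)$ gives $xy=yx$. Hence $yx=0$, and $R$ is reversible.

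\textbf{$(1)\Rightarrow(2)$.} Let $R$ be reversible and put $p=ab\in P(R)$; only $p^{2}=p$ will be used, not $p^{\ast}=p$. As already used in the proof of Proposition \ref{pro2.02}, reversible rings are abelian, so $p\in C(R)$.

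The first step is to show $ba\in pR$. We have $(1-p)a\cdot b=(1-p)p=0$, so reversibility gives $b(1-p)a=0$. Since $p$ is central, this reads $(1-p)ba=0$. Therefore
\[
ba=pba=bap .
\]

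The second step is to compute $bap$. Using $p=ab$ and $p^{2}=p$,
\[
bap=b\,a\,ab\,? \quad\text{is not the route;}\quad bap=(ba)(ab)\ \text{is handled via}\ x=ba-p .
\]
More precisely, set $x=ba-p$. Centrality of $p$ gives $ax=aba-pa=pa-pa=0$. Reversibility then yields $xa=0$, hence
\[
xp=xab=(xa)b=0 .
\]
Expanding $xp=0$ gives $bap-p^{2}=0$, that is, $bap=p$.

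Combining the two steps, $ba=bap=p=ab$, which proves (2).

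The only step needing care is showing that $x=ba-ab$ vanishes rather than merely being nilpotent. One can also check $x^{2}=0$, but that alone does not suffice. The step that closes the gap is using centrality of $p$ to write $ba=bap$, so that $xp=0$ already forces $x=0$.
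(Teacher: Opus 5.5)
Your proof is correct and follows essentially the paper's route: for $(2)\Rightarrow(1)$ you both use $0\in P(R)$, and for $(1)\Rightarrow(2)$ you, like the paper, apply reversibility to $(1-ab)\,ab=0$ to get $b(1-ab)a=0$ and use that reversible rings are abelian. The only difference is the finish: the paper reads $b(1-ab)a=0$ as idempotency of $ba$ and uses centrality of both $ab$ and $ba$ to get $ba=b(ab)a=ab(ba)=a(ba)b=ab$, whereas you use centrality of $ab$ alone plus a second application of reversibility to $a(ba-ab)=0$. The garbled display with the ``?'' in your second step should simply be deleted.
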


\begin{proof}
$(1)\Rightarrow(2)$. Assume $ab\in P(R)$.
Then $(1-ab)ab=0$. Since $R$ is reversible, we have $b(1-ab)a=0$,
which gives $ba\in E(R)$.
Every reversible ring is abelian, so all idempotent elements are central in $R$.
Hence, we obtain
\[
ba = b(ab)a = ab(ba) = a(ba)b = ab.
\]

$(2)\Rightarrow(1)$. If $ab=0$, then $ab\in P(R)$. By condition (2), $ba=ab=0$,
which proves that $R$ is reversible.
\end{proof}

\begin{proposition}\label{pro3.07}
Let $R$ be a pro-symmetric ring. For any $x_{1},x_{2},x_{3}\in R$, if $x_{1}x_{2}x_{3}\in P(R)$, then
\[
x_{1}x_{2}x_{3}=x_{\sigma(1)}x_{\sigma(2)}x_{\sigma(3)}
\]
for every permutation $\sigma\in S_3$.
\end{proposition}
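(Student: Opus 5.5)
We need to show that if $p=x_1x_2x_3\in P(R)$, then all six permutations of the product equal $p$. The plan combines two earlier results. By Corollary \ref{cor3.04}, pro-symmetry gives $acb=abc$ whenever $abc\in P(R)$; this lets us transpose the last two factors. Since pro-symmetric rings are symmetric, hence reversible, Lemma \ref{lem3.06} lets us swap the factors of a two-fold product that is a projection. We will regroup the triple product as a two-fold product to obtain cyclic shifts, and then combine shifts with the transposition.

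\textbf{Step 1 (transposition of the last two factors).} Apply Corollary \ref{cor3.04} with $a=x_1$, $b=x_2$, $c=x_3$ to get $x_1x_3x_2=x_1x_2x_3=p$.

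\textbf{Step 2 (cyclic shifts).} Write $p=(x_1)(x_2x_3)$. Since $R$ is reversible and $p\in P(R)$, Lemma \ref{lem3.06} gives $x_2x_3x_1=x_1x_2x_3=p$. Likewise, writing $p=(x_1x_2)(x_3)$ gives $x_3x_1x_2=p$. So all three cyclic rotations of $p$ equal $p$, and in particular each is a projection.

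\textbf{Step 3 (the remaining transpositions).} Apply Step 1 to each of these projection-valued rotations:
\begin{itemize}
\item from $x_2x_3x_1=p$ we get $x_2x_1x_3=p$;
\item from $x_3x_1x_2=p$ we get $x_3x_2x_1=p$.
\end{itemize}
Together with $x_1x_2x_3$ and $x_1x_3x_2$ from Step 1 and the two rotations from Step 2, this accounts for all six elements of $S_3$.

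The main point to check is that every product fed into Corollary \ref{cor3.04} or Lemma \ref{lem3.06} is indeed a projection. This holds because each such product is literally equal to $p$, by the preceding step. Everything else is bookkeeping over $S_3$, which is generated by a transposition together with the 3-cycle.
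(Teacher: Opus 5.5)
Your proof is correct and follows essentially the same route as the paper: Corollary \ref{cor3.04} swaps the last two factors, reversibility via Lemma \ref{lem3.06} gives the cyclic rotations, and Corollary \ref{cor3.04} is then applied again to the rotated products, which are still equal to the projection $p$. The only difference is bookkeeping: you get both rotations at once from the groupings $(x_1)(x_2x_3)$ and $(x_1x_2)(x_3)$, whereas the paper chains the steps one after another.
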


\begin{proof}
By Corollary \ref{cor3.04}, we immediately have $x_{1}x_{2}x_{3}=x_{1}x_{3}x_{2}$.
Note that every pro-symmetric ring is symmetric and hence reversible.
Since $x_{1}x_{2}x_{3}\in P(R)$ and $R$ is reversible,
Lemma \ref{lem3.06} implies $x_{1}x_{2}x_{3}=x_{2}x_{3}x_{1}\in P(R)$.
Applying Corollary \ref{cor3.04} once more, we get $x_{2}x_{3}x_{1}=x_{2}x_{1}x_{3}$.
Repeating the above reasoning, we further obtain $x_{2}x_{1}x_{3}=x_{3}x_{2}x_{1}=x_{3}x_{1}x_{2}$.
Thus all permuted triple products coincide, which completes the proof.
\end{proof}

Herein, we present two distinct observations.

\begin{remark}\label{rem3.08}
Let $R$ be a $\ast$-ring. The following statements are equivalent for all $a,b,c\in R$:
\begin{enumerate}
\item[(1)] $R$ is reversible.
\item[(2)] $abc \in P(R)$ implies $abc = bca$.
\item[(3)] $abc \in P(R)$ implies $abc = cab$.
\end{enumerate}
Indeed, $(1)\Rightarrow(2)$. It suffices to regard $bc$ as a single element.
Then the desired conclusion follows from Lemma \ref{lem3.06}.
The proof of $(1)\Rightarrow(3)$ is similar.

$(2)\Rightarrow(1)$. If $xy\in P(R)$, write $xy=xy1$. By condition (2), we obtain $xy=yx$.
Hence $R$ is reversible by Lemma \ref{lem3.06}.

The proof of $(3)\Rightarrow(1)$ is similar. If $xy\in P(R)$, write $xy=x1y$.
Then by condition (3), we obtain $xy=yx$.
\end{remark}

\begin{remark}\label{rem3.09}
Let $R$ be a $\ast$-ring. The following statements are equivalent for all $a,b,c\in R$:
\begin{enumerate}
\item[(1)] $R$ is pro-symmetric.
\item[(2)] $abc \in P(R)$ implies $abc = bac$.
\item[(3)] $abc \in P(R)$ implies $abc = cba$.
\end{enumerate}
Indeed, $(1)\Rightarrow(2)$ and $(1)\Rightarrow(3)$. These implications are proved in Proposition \ref{pro3.07}.

$(2)\Rightarrow(1)$. We claim that $R$ is reversible. Indeed,
if $xy\in P(R)$, write $xy=xy1$. By condition (2), we obtain $xy=yx$.
Hence $R$ is reversible by Lemma \ref{lem3.06}.
Now suppose that $abc \in P(R)$. Then by condition (2), we have $bac=abc\in P(R)$.
Again by Lemma \ref{lem3.06}, we obtain $(ac)b = b(ac)\in P(R)$.
Therefore $R$ is pro-symmetric.

The proof of $(3)\Rightarrow(1)$ is similar.
\end{remark}

Recall that a ring $R$ is called $\ast$-symmetric \cite{WFW1} if $abc=0$ implies $acb^{\ast}=0$ for all $a,b,c\in R$.
It has been verified that every pro-symmetric ring is symmetric, and that every $\ast$-symmetric ring is also symmetric.
At the end of this section, we provide several examples to clarify the relationship between pro-symmetric rings and $\ast$-symmetric rings.

\begin{example}
There exists a pro-symmetric ring which is not $\ast$-symmetric.
Let $R=\mathbb{Z}[x]/(x^{2}+x)$.
We know that $R$ is a commutative ring.
Hence $R$ is pro-symmetric.
Nevertheless, $R$ is not $\ast$-symmetric (see \cite{WFW1}, Example 2.1).
\end{example}

\begin{example}
There exists a $\ast$-symmetric ring that is not pro-symmetric.
Let $\mathbb{H}$ be the real quaternion division ring.
As observed in Remark \ref{rem2.09}, we can equip $\mathbb{H}$ with a suitable involution $\ast$.
Take $a=\mathbf{i}$, $b=\mathbf{j}$, and $c=-\mathbf{k}$.
A direct computation gives $abc=1$ and $acb=-1$.
Clearly, $abc=1\in P(\mathbb{H})$, while $acb=-1$ is not idempotent and hence not a projection.
Consequently, $\mathbb{H}$ is not pro-symmetric.

On the other hand, $\mathbb{H}$ is $\ast$-symmetric.
Note that every nonzero element of $\mathbb{H}$ is invertible.
If $xyz=0$ for $x,y,z\in\mathbb{H}$, then at least one of $x,y,z$ must be zero,
which forces $xzy^{\ast}=0$.
Therefore, $\mathbb{H}$ is a $\ast$-symmetric ring.
\end{example}

\section*{Acknowledge}
The authors sincerely thank Prof. Junchao Wei for his valuable comments and suggestions,
which greatly improved the presentation of this paper.
This work is supported by the National Natural Science Foundation of China (12471133)
and the Anhui Provincial Department of Education Natural Science Research Project (2025AHGXZK30855).

\end{document}